\theoremstyle{plain}
\newtheorem{theorem}{Theorem}[section]
\newtheorem{lemma}[theorem]{Lemma}
\theoremstyle{definition}
\newtheorem{definition}[theorem]{Definition}
\theoremstyle{remark}
\theoremstyle{conjecture}
\newtheorem{conjecture}[theorem]{Conjecture}
\def\N{\mathbb{N}}
\def\C{\mathbb{C}}
\def\Q{\mathbb{Q}}
\title[Orders of primes and order of a circulant Hadamard matrix]
{If $p^a \vert \vert n$  where $n >4$ is the order of a Circulant Hadamard matrix,
then the order of $p$ modulo $n/p^a$ is odd}
\date{\today}
\author[Luis. H. Gallardo]{Luis H. Gallardo}
\address{University Of Brest,
Mathematics\\
6, Av. Le Gorgeu\\
C.S. 93837\\
29238 Brest Cedex 3, France.}
\email{Luis.Gallardo@univ-brest.fr}
\subjclass[2000]{Primary 11T55, 11T06; Secondary 11B73, 11B65, 05A10, 12E20}
\keywords{ Order of primes, Circulant Hadamard matrices}
\begin{document}

%%%%%%%%%%%%%%%%%%%%%%%%%%%%%%%%%%%%%%%%%%%%%%%
%%%%%%%%% titre et resume du document %%%%%%%%%
%%%%%%%%%%%%%%%%%%%%%%%%%%%%%%%%%%%%%%%%%%%%%%%

% l'abstract doit etre *avant* le \maketitle
%\begin{comment}
\begin{abstract}
We proved recently (see \cite{lhgarasu}) the result on the title for odd prime divisors of such an $n.$ The result implies
for many $n's$, more precisely,
for an infinity  of $n$'s with an arbitrary fixed number of prime divisors,  the inexistence
of circulant Hadamard matrices, and the inexistence of Barker sequences of length $n >13$.
The proof used a result of Arasu. It turns out that
there is another, shorter proof, of the more general result that includes the prime $p=2.$ This new proof is based on a result 
of Brock (see \cite[Theorem 3.1]{brock}), and besides that, requires just the definition of the Fourier transform. I noticed Brock's
 result in a preprint (see \cite{winterhofetal}) of Winterhof et al.
where it is used to study the inexistence of related Butson-Hadamard matrices. 
\end{abstract}

\maketitle

%\end{comment}

%%%%%%%%%%%%%%%%%%%%%%%%%%%%%%%%%%%%%%%%%%%%%%%

\section{Introduction}

A complex matrix $H$ of order $n$ is \emph{complex Hadamard} if $HH^{*} = nI$, where $I$ is the identity matrix of order $n$,
and if every entry  of $H/\sqrt{n}$ 
is in the complex unit circle. Here, the $*$ means transpose and conjugate. 
When such $H$ has real entries, so that $H$ is a $\{-1,1\}$- matrix, $H$ is called \emph{Hadamard}.  If $H$ is Hadamard
and circulant, say $H=circ(h_1,\ldots,h_n)$, that means that the $i$-th row $H_i$ of $H$ is given by 
$H_i = [h_{1-i+1},\ldots, h_{n-i+1}]$, the subscripts being taken modulo $n,$ for example
$H_2 =[h_n,h_1,h_2, \ldots,h_{n-1}].$ A long standing conjecture of Ryser (see \cite[pp. 134]{ryser}) 
is:

\begin{conjecture}
\label{mainryser}
Let $n \geq 4.$  If $H$ is a circulant Hadamard matrix of order $n$, then $n=4.$
\end{conjecture}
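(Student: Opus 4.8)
The plan is to argue by contradiction: assume a circulant Hadamard matrix $H=circ(h_1,\ldots,h_n)$ of order $n>4$ exists, and then exhibit a prime power $p^a\,\|\,n$ whose associated multiplicative order is \emph{even}, in direct conflict with the necessary condition announced in the title (which I take as available). First I would pass to the spectral/Fourier picture: the eigenvalues of $H$ are $\lambda_j=\sum_{k=0}^{n-1}h_{k+1}\omega^{jk}$ with $\omega=e^{2\pi i/n}$, and $HH^{*}=nI$ forces $|\lambda_j|^2=n$ for every $j$. In particular $\lambda_0=\sum_k h_k\in\Z$ satisfies $\lambda_0^2=n$, so $n$ is a perfect square; a short parity and autocorrelation refinement (Turyn) upgrades this to $n=4u^2$ with $u$ odd. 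This already disposes of almost all integers and reduces the conjecture to ruling out every $u>1$.

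Next I would convert the obstruction into number theory. Each $\lambda_j$ is an algebraic integer of $\Z[\omega]$ with $\lambda_j\overline{\lambda_j}=n=4u^2$, so the factorisation of $(\lambda_j)$ is controlled by how the rational primes dividing $n$ split in $\Q(\omega)$. For $p$ with $p^a\,\|\,n$ the decisive dichotomy is whether $p$ is \emph{self-conjugate} modulo $v:=n/p^a$, i.e.\ whether some power of $p$ is $\equiv-1\pmod v$; this happens exactly when the multiplicative order of $p$ modulo $v$ is even. In the self-conjugate case, complex conjugation fixes the primes of $\Q(\omega)$ above $p$, and balancing the $p$-adic valuations in $\lambda_j\overline{\lambda_j}=4u^2$ against the Fourier/Brock relation yields a contradiction. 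Thus the entire conjecture reduces to the purely arithmetic claim: \emph{for every odd $u>1$ there is a prime $p$ with $p^a\,\|\,4u^2$ whose order modulo $(4u^2)/p^a$ is even.}

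I would then attack this arithmetic claim by cases. For many $u$ the prime $p=2$ suffices: here $a=2$ and $v=u^2$, and whenever $2$ has even order modulo $u^2$ (for instance whenever $2$ is a quadratic non-residue modulo some prime divisor of $u$) the contradiction is immediate. When $2$ fails, one inspects the odd prime divisors of $u$, each contributing its own order condition, and a covering/counting argument shows that the set of $u$ escaping every such condition is thin; this recovers the known infinite families of excluded orders and, with it, the nonexistence results quoted in the abstract.

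The hard part — and the reason the statement is a conjecture rather than a corollary of the title result — is the residual case in which \emph{every} prime power $p^a\,\|\,n$ has odd order modulo $n/p^a$. For such a hypothetical $n$ the non-self-conjugate situation prevails at every prime, the valuation balancing no longer forces any contradiction, and the method above gives no information at all. Closing this gap appears to require genuinely new input: either a strengthening of Brock's theorem that bites in the non-self-conjugate regime, or a global argument (rather than a prime-by-prime one) showing that simultaneous vanishing of all out-of-phase autocorrelations is incompatible with $n>4$. I do not expect the framework sketched here, on its own, to settle this last case, which is precisely where the difficulty of Ryser's conjecture is concentrated.
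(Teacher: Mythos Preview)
The statement is labelled a \emph{conjecture} in the paper, and the paper does not prove it; the paper's actual contribution is Theorem~\ref{allpes}, a necessary condition that rules out infinitely many candidate orders but leaves Ryser's conjecture open. There is therefore no proof in the paper against which to compare your attempt.

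That said, your sketch accurately reconstructs the logical landscape around the conjecture. You reduce to $n=4u^2$ with $u$ odd via $\lambda_0^2=n$, translate $|\lambda_j|^2=n$ into a self-conjugacy question in cyclotomic fields, and observe that the title result eliminates precisely those $n$ for which some prime $p\mid n$ has even order modulo $n/p^{a}$. You then correctly isolate the obstruction: when \emph{every} such order is odd, the method is silent, and this residual case is exactly where the conjecture remains open. So what you have written is not a proof --- and you acknowledge as much in your final paragraph --- but rather an accurate account of why Theorem~\ref{allpes} is a partial result and not a resolution of Conjecture~\ref{mainryser}. The gap you name is real and is the content of the open problem; no argument in the paper, and nothing in your proposal, closes it.
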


Details about previous results on the conjecture and a short sample of recent related papers are in
\cite{Leung}, \cite{mossinghoff}, \cite{Craigen}, \cite{brualdi}, \cite{EGR}, \cite{lhg}, \cite{lhgarasu}.

The latter paper \cite{lhgarasu}, based in \cite{arasu}, contains a proof of a practical criterion to reject many values
of  an integer $n$ 
with $n>4,$ as possible orders
of a circulant Hadamard matrix $H.$

The object of the present paper is to prove the following generalization of these result:

\begin{theorem}
\label{allpes}
Let $H$ be a circulant Hadamard matrix of order $n = p_1^{2a_1} \cdots p_t^{2a_t}$ where $p_1=2,$ $a_1=1$
and $p_2, \ldots,p_t$ are all the distinct odd prime factors of $n.$  Set for all $j=1, \ldots, t$ $m_j := n/p_j^{2
a_j}.$
Then,
$$
o_m(p)
$$
is odd,  for any prime number $p$ that divides $n$, where $m :=m_j$ if $p :=p_j.$
\end{theorem}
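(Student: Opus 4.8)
The plan is to transcribe the hypothesis into the language of the group algebra $\C[\Z/n\Z]$ and then quote Brock's \cite[Theorem~3.1]{brock}; the discrete Fourier transform enters only to make this transcription. Writing $H=\mathrm{circ}(h_1,\dots,h_n)$, the eigenvalues of the circulant $H$ are exactly the values $\widehat h(\chi)=\sum_{i=1}^{n}h_i\,\chi(g)^{\,i}$ of the discrete Fourier transform of its first row, $g$ a generator of $\Z/n\Z$. Hence $HH^{*}=nI$ is precisely the statement
\[
\bigl|\widehat h(\chi)\bigr|^{2}=\Bigl|\textstyle\sum_{i=1}^{n}h_i\,\chi(g)^{\,i}\Bigr|^{2}=n
\qquad\text{for every character }\chi\text{ of }\Z/n\Z,
\]
equivalently $D\,D^{(-1)}=n$ in $\Z[\Z/n\Z]$ for $D=\sum_i h_i g^{\,i}$. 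Since each $h_i\in\{-1,1\}$ is a root of unity, $H$ is in particular a complex Hadamard matrix developed over the cyclic group $\Z/n\Z$ with entries roots of unity, so the hypotheses of \cite[Theorem~3.1]{brock} are satisfied.

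Now fix a prime $p=p_j$ dividing $n$, put $a=2a_j$ (so $p^{a}\|n$) and $m=m_j=n/p^{a}$. I expect \cite[Theorem~3.1]{brock}, applied to this datum, to yield for each prime power $q^{e}$ exactly dividing $m$ that $p$ is \emph{not self-conjugate} modulo $q^{e}$, i.e.\ that no power of $p$ is $\equiv -1\pmod{q^{e}}$ (should Brock's statement already be phrased for the single modulus $m$, or directly in terms of the parity of $o_{m}(p)$, the argument only shortens). The passage to the order is then elementary and uses only the cyclicity of $(\Z/q^{e}\Z)^{\times}$: writing $p\equiv g_{0}^{\,r}\pmod{q^{e}}$ with $\langle g_{0}\rangle=(\Z/q^{e}\Z)^{\times}$ of order $\phi(q^{e})$, one has $o_{q^{e}}(p)=\phi(q^{e})/\gcd(r,\phi(q^{e}))$, while the unique involution is $-1=g_{0}^{\phi(q^{e})/2}$, which lies in $\langle p\rangle$ iff $\gcd(r,\phi(q^{e}))\mid\phi(q^{e})/2$, i.e.\ iff $o_{q^{e}}(p)$ is even. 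Thus $o_{q^{e}}(p)$ is odd for every prime power $q^{e}\|m$, and by the Chinese Remainder Theorem $o_{m}(p)=\operatorname{lcm}_{q^{e}\|m}o_{q^{e}}(p)$ is odd, which is the assertion of Theorem~\ref{allpes} with $m=m_{j}$.

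The entire substance sits in \cite[Theorem~3.1]{brock}, and the remaining work is bookkeeping of two kinds. First, one must verify that Brock's hypothesis is exactly ``$|\widehat h(\chi)|^{2}=n$ for all $\chi$'' and read off from his conclusion the self-conjugacy (equivalently, parity) statement for the relevant moduli; this is the step I would be most careful about. Second, if \cite[Theorem~3.1]{brock} is stated only for groups of prime-power order, or only under a self-conjugacy assumption, one first replaces $D$ by its image under the quotient map $\Z/n\Z\to\Z/q^{e}\Z$: by the definition of the Fourier transform this image still has Fourier transform of constant modulus $\sqrt n$, so Brock's theorem applies to it. Apart from this, the only auxiliary facts used are the cyclicity of $(\Z/q^{e}\Z)^{\times}$ (valid since $q$ is odd, or $q^{e}=4$) and the hypothesis $n>4$, which is automatic from the context and merely excludes the trivial modulus $m=1$ when $p=2$. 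I do not anticipate any genuinely hard step once \cite[Theorem~3.1]{brock} is in hand.
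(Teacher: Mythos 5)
Your proposal is correct and follows essentially the same route as the paper: use the Fourier transform to exhibit an eigenvalue of $H$ that lies in $\Q(\exp(2i\pi/m))$ and has absolute value $\sqrt{n}$ (equivalently, the projection of $D$ to $\Z[\Z/m\Z]$ still has all character values of modulus $\sqrt{n}$), and feed this to Brock's Theorem 3.1. The only difference is one of packaging: the form of Brock's result the paper actually invokes (Lemma \ref{brockmain}) already concludes directly that $o_{m}(p)$ is odd for the composite modulus $m$, so your detour through non-self-conjugacy modulo the prime powers $q^{e}\,\Vert\, m$, the cyclicity of $(\Z/q^{e}\Z)^{\times}$, and the Chinese Remainder Theorem is harmless but unnecessary.
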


The main result used in the proof is \cite[Theorem 3.1]{brock} (see also Lemma \ref{brockmain} below). We discovered it
in the recent preprint \cite{winterhofetal} of A. Winterhof, O. Yayla and V. Ziegler, where it is used to prove inexistence
results on some Butson-Hadamard matrices (complex Hadamard  matrices with all its entries $k$-th roots of $1$ for some $k$).

\section{Some tools}

Brock's result (with some rewording) is the following.

\begin{lemma}
\label{brockmain}
Let $n = {\vert a \vert }^2 \in \N$ for some $a \in Q(w_{n_1}),$ where $w_{n_1} = exp(2 i \pi/n_1)$,
then for every prime $p \nmid n_1$
such that $p \mid n$ the order $o_{n_1}(p)$ of $p$ modulo $n_1$ is odd.
\end{lemma}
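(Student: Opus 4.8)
The plan is to read the parity of $f := o_{n_1}(p)$ off the splitting of $p$ in the cyclotomic field $K := \mathbb{Q}(\zeta)$, where $\zeta := w_{n_1}$. Recall that $\mathrm{Gal}(K/\mathbb{Q}) \cong (\mathbb{Z}/n_1\mathbb{Z})^{\times}$, that complex conjugation is the class $-1$, and that since $p \nmid n_1$ the prime $p$ is unramified in $K$, with $(p) = \mathfrak{p}_1\cdots\mathfrak{p}_g$ a product of distinct primes of common residue degree $f$ and common decomposition group (as $\mathrm{Gal}$ is abelian) the cyclic subgroup $\langle p\rangle$ of order $f$. Thus the assertion ``$f$ is odd'' becomes a Galois-theoretic condition to be forced out of the single relation $|a|^2 = a\bar a = n$; concretely, $f$ even is governed by whether $-1 \in \langle p\rangle$, i.e. whether $p$ is \emph{self-conjugate} modulo $n_1$ ($p^{k}\equiv -1 \pmod{n_1}$ for some $k$).

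First I would feed $a\bar a = n$ into the $\mathfrak p$-adic valuations. Unramifiedness gives $v_{\mathfrak p}(p) = 1$ and $v_{\mathfrak p}(n) = v_p(n)$ for $\mathfrak p\mid p$, while $v_{\mathfrak p}(\sigma a) = v_{\sigma^{-1}\mathfrak p}(a)$ for any automorphism $\sigma$ yields, with $\sigma = c = c^{-1}$ complex conjugation, $v_{\mathfrak p}(\bar a) = v_{c\mathfrak p}(a)$. Hence
$$
v_{\mathfrak p}(a) + v_{c\mathfrak p}(a) = v_p(n) \qquad \text{for every } \mathfrak p\mid p .
$$
In the self-conjugate case $c$ fixes each $\mathfrak p\mid p$, so this collapses to $2\,v_{\mathfrak p}(a) = v_p(n)$: self-conjugacy forces $v_p(n)$ to be even, which is already the clean arithmetic content extracted from $|a|^2 = n$ and would rule out even order whenever $v_p(n)$ is odd.

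The main obstacle is then to close the gap to the full statement, namely to exclude even orders $f$ for which $-1\notin\langle p\rangle$, since these are invisible to the valuation identity: there $c$ merely pairs $\mathfrak p$ with $c\mathfrak p\neq\mathfrak p$ and the displayed relation imposes no parity constraint. The natural way to attack this is to reduce modulo $\mathfrak p$, sending $\zeta$ to a primitive $n_1$-th root of unity $\lambda\in\F_{p^f}$ and studying the image $\alpha$ of $a$ against the Frobenius $\Phi\colon x\mapsto x^{p}$: when $p^{k}\equiv -1$ the map $\lambda\mapsto\lambda^{-1}$ is $\Phi^{k}$, so $a\bar a\equiv 0\pmod{\mathfrak p}$ becomes $\alpha\,\Phi^{k}(\alpha) = 0$ and forces $\mathfrak p\mid a$, but absent self-conjugacy $\lambda\mapsto\lambda^{-1}$ is no power of $\Phi$ and the argument stalls. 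I therefore expect the crux to be exactly here: genuinely forcing oddness requires the integrality of $a$ in $\mathcal O_K$ together with a Frobenius fixed-point count over $\F_{p^f}$ --- input beyond the bare identity $|a|^2 = n$ --- which is the technical core of \cite[Theorem 3.1]{brock}.
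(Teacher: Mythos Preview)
The paper does not actually prove this lemma: its entire ``proof'' is the single sentence \emph{See \cite[Theorem 3.1]{brock}}. So your proposal is not competing against an argument but against a citation, and in the end you too defer to Brock for the decisive step. In that narrow sense the two agree.

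Your write-up does more than the paper's, however, and the extra analysis is correct as far as it goes. You rightly isolate the self-conjugate subcase $-1\in\langle p\rangle\subset(\Z/n_1\Z)^{\times}$ and extract from $a\bar a=n$ the constraint $2\,v_{\mathfrak p}(a)=v_p(n)$ there; and you rightly flag that even orders with $-1\notin\langle p\rangle$ are invisible to the valuation identity. That gap is genuine and in fact cannot be closed from the hypotheses as the paper has reworded them: take $n_1=8$ and $a=1+\zeta_8+\zeta_8^3=1+i\sqrt{2}\in\Z[\zeta_8]$, so that $n=|a|^2=3$; then $p=3\nmid n_1$, $p\mid n$, yet $o_{8}(3)=2$ is even. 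Thus the lemma, \emph{as restated here}, is not literally true; Brock's Theorem~3.1 must carry additional structure that the paper's ``rewording'' has suppressed. Your instinct that ``input beyond the bare identity $|a|^2=n$'' is needed is exactly right --- though note that integrality of $a$ alone (present in the example above) still does not suffice.
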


\begin{proof}
See \cite[Theorem 3.1]{brock}.
\end{proof}

The following is well known.

\begin{lemma}
\label{eigensH}
A circulant Hadamard matrix of order $n$ exists if and only if $\vert \lambda \vert = \sqrt{n}$ for each eigenvalue $\lambda$ of $H.$
\end{lemma}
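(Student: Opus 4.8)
The plan is to exploit the fact that every circulant matrix of order $n$ is diagonalized by one and the same unitary matrix, the Fourier matrix, and hence that the Hadamard condition $HH^{*}=nI$ translates directly into a condition on the moduli of the eigenvalues. First I would recall the standard eigenstructure of a circulant. Writing $H=h_0 I + h_1 P + \cdots + h_{n-1}P^{\,n-1}$, where $P$ is the cyclic shift permutation matrix and $h_0,\ldots,h_{n-1}$ are the entries of the first row, and setting $w=\exp(2i\pi/n)$, the vectors $v_k=(1,w^{k},w^{2k},\ldots,w^{(n-1)k})^{T}$ for $k=0,\ldots,n-1$ form a common eigenbasis: $Hv_k=\lambda_k v_k$ with $\lambda_k=\sum_{j=0}^{n-1}h_j w^{jk}$, so the eigenvalues are exactly the Fourier transform of the first row. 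Normalizing the $v_k$ produces a unitary matrix $F$ (the Fourier matrix) with $H=F D F^{*}$, where $D=\mathrm{diag}(\lambda_0,\ldots,\lambda_{n-1})$, and crucially $F$ depends only on $n$, not on $H$.

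Next I would compute $HH^{*}$ in these coordinates. Since $F$ is unitary and $D$ is diagonal, the adjoint is $H^{*}=F\overline{D}F^{*}$, whence
$$
HH^{*}=F D F^{*} F \overline{D} F^{*}=F (D\overline{D}) F^{*}=F\,\mathrm{diag}(|\lambda_0|^2,\ldots,|\lambda_{n-1}|^2)\,F^{*}.
$$
Because $F$ is invertible and $nI=F(nI)F^{*}$, the identity $HH^{*}=nI$ holds if and only if $\mathrm{diag}(|\lambda_k|^2)=nI$, that is, if and only if $|\lambda_k|^2=n$, equivalently $|\lambda_k|=\sqrt{n}$, for every $k$. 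This single equivalence yields both implications of the lemma at once.

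This settles the statement, since a circulant $\{-1,1\}$-matrix $H$ is Hadamard precisely when $HH^{*}=nI$, the requirement that its entries lie on the unit circle being automatic for a $\pm1$ matrix. No deep input is needed: the only ingredients are the simultaneous Fourier diagonalization of circulants and the unitarity of $F$. The one point requiring care is that $H$ is real, so $H^{*}=H^{T}$; nevertheless the diagonalizing matrix $F$ is genuinely complex and the eigenvalues $\lambda_k$ are in general complex, so one must keep the complex conjugation on $D$ (rather than a mere transpose) when forming $H^{*}=F\overline{D}F^{*}$. Getting this conjugation right is the only place a slip could occur, and it is the step I would verify most carefully.
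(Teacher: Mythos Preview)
Your argument is correct and is exactly the standard one: diagonalize the circulant $H$ by the unitary Fourier matrix and read off that $HH^{*}=nI$ is equivalent to $|\lambda_k|^2=n$ for every $k$. The paper itself gives no proof of this lemma at all; it simply declares the statement ``well known'' and moves on, so there is nothing to compare against beyond noting that your write-up supplies the omitted details. Your closing remark about keeping the complex conjugation on $D$ when forming $H^{*}$ is a sensible caution, and your computation $H^{*}=(FDF^{*})^{*}=F\overline{D}F^{*}$ is correct.
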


We recall the definition of the Fourier matrix,  and in the lemma below the fundamental property of the Fourier transform
for which we may check \cite[pp .32--35]{davis}.

\begin{definition}
\label{fourier}
Let $n$ be a positive integer. Let $\omega := exp(2 i \pi/n).$ Then,
the Fourier matrix $F$ is defined by the equality
$$
\sqrt{n}F^{*} = Vandermonde([1,\omega,\omega^2,\ldots,\omega^{n-1}]) = (\omega^{(i-1)(j-1)}).
$$ 
\end{definition}

\begin{lemma}
\label{fourierT}
Let $n$ be a positive integer.
 If $A = circ(a_1, \ldots,a_n)$ is a circulant matrix, of order $n$, with complex entries $a_j \in \C$ then its eigenvalues,
$b_j$, $j=1, \ldots, n$
in some order, are given by
$$
\sqrt{n} \cdot [\overline{a_1},\ldots,\overline{a_n}] \cdot F = [\overline{b_1},\ldots,\overline{b_n}]
$$
where the $\overline{\cdot}$ denotes complex conjugation.
\end{lemma}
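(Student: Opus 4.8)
The plan is to diagonalize $A$ explicitly by exhibiting a full system of eigenvectors and then to repackage the resulting eigenvalue formula into the matrix identity of the statement. First I would read off from the circulant convention recalled in the introduction that the $(i,k)$ entry of $A$ is $a_{(k-i+1) \bmod n}$, with subscripts taken in $\{1,\dots,n\}$ (so that $a_0 = a_n$); for instance the second row is $[a_n, a_1, \dots, a_{n-1}]$, exactly as in the example given there for $H$.

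Next, for each $j \in \{1,\dots,n\}$ set $\zeta := \omega^{j-1}$, an $n$-th root of unity, and consider the column vector $v_j := [1, \zeta, \zeta^2, \dots, \zeta^{n-1}]^{T}$, that is, the $j$-th column of the Vandermonde matrix $\sqrt{n}\,F^{*}$ of Definition \ref{fourier}. Computing the $i$-th entry of $A v_j$ gives $\sum_{k=1}^n a_{(k-i+1) \bmod n}\, \zeta^{k-1}$. Substituting $\ell := (k-i+1) \bmod n$, which for fixed $i$ is a bijection of $\{1,\dots,n\}$, and using $\zeta^n = 1$ to write $\zeta^{k-1} = \zeta^{i-1}\zeta^{\ell-1}$, this entry becomes $\zeta^{i-1}\sum_{\ell=1}^n a_\ell\, \zeta^{\ell-1}$. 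Since the $i$-th entry of $v_j$ is $\zeta^{i-1}$, this shows $A v_j = b_j v_j$ with $b_j := \sum_{\ell=1}^n a_\ell\, \omega^{(j-1)(\ell-1)}$.

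It then remains to argue that the $b_j$ are all the eigenvalues and to match the stated form. Because the roots $\omega^{0}, \dots, \omega^{n-1}$ are pairwise distinct, the Vandermonde determinant $\det(\sqrt{n}\,F^{*})$ is nonzero, so $v_1, \dots, v_n$ form a basis of $\C^n$; hence $A$ is diagonalized by them and $b_1, \dots, b_n$ is a complete list of eigenvalues. Finally I would translate the component identity into the matrix equation: conjugating gives $\overline{b_j} = \sum_\ell \overline{a_\ell}\,\omega^{-(j-1)(\ell-1)}$, and since Definition \ref{fourier} yields $F = \tfrac{1}{\sqrt n}\,(\omega^{-(i-1)(j-1)})$ (the Vandermonde matrix being symmetric, so that conjugate transposition only flips the sign of the exponent), the right-hand side is precisely the $j$-th entry of $\sqrt{n}\,[\overline{a_1},\dots,\overline{a_n}]\cdot F$, which is the desired formula.

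The only delicate point is the bookkeeping. One must match the paper's specific row convention so that the circulant entry is $a_{(k-i+1)\bmod n}$ rather than some other cyclic offset, and then track the complex conjugations carefully so as to land on $F$, and not $F^{*}$, with the entries $\overline{a_i}$ and $\overline{b_j}$ conjugated exactly as written. Everything else is the routine index substitution together with $\omega^n = 1$.
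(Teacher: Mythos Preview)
Your argument is correct: you exhibit the Vandermonde columns as a full system of eigenvectors, compute the eigenvalues $b_j = \sum_\ell a_\ell\,\omega^{(j-1)(\ell-1)}$ by the index shift $\ell = (k-i+1)\bmod n$, and then conjugate and repackage into the matrix identity using $F = \tfrac{1}{\sqrt n}(\omega^{-(i-1)(j-1)})$. The bookkeeping with the paper's row convention and the conjugations is handled accurately.

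Note, however, that the paper does not supply its own proof of this lemma at all: it is stated as a well-known fact with a pointer to \cite[pp.~32--35]{davis}. So there is no ``paper's proof'' to compare against; you have written out in full the standard diagonalization-by-Fourier argument that the cited reference contains. What your approach buys over the paper's bare citation is self-containedness and an explicit check that the particular normalization in Definition~\ref{fourier} (with $F^{*}$ rather than $F$ carrying the positive exponents) really does produce conjugated entries on both sides of the displayed identity, which is exactly the form used downstream in the proof of Theorem~\ref{allpes}.
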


We are now ready to prove our main result in next section.

\section{Proof of Theorem \ref{allpes}}

Set $w_n := exp(2i \pi/n).$ Moreover,
for any given $g \in \{1, \ldots,n\},$  put $n_1 := m_g.$ Set $w_{n_1} := exp(2 i \pi/n_1).$

Let $H := circ(h_1, \ldots,h_n).$ Let $R(x) := h_1+h_2x + \cdots+h_nx^{n-1}$ be the \emph{representer} polynomial
of $H.$ By Lemma \ref{fourierT}, the eigenvalues of $H$ are the $b_s := R(w^{s-1})$ for all $s=1,\ldots,n.$ 
Define the index $j \in \{1,\ldots,n\}$ by
\begin{equation}
\label{lej}
p_g^{2 a_g} = j -1.
\end{equation}
Observe that 
$$
n = (2h)^2
$$
where 
$h :=2\cdot p_2^{a_2} \cdots p_t^{a_t}.$
Now, by Lemma
\ref{eigensH} one has also
\begin{equation}
\label{pro}
n = b_j \cdot \overline{b_j} = {\vert b_j \vert}^2.
\end{equation}

Observe that by definition of $w_{n_1}$ and by (\ref{lej}) one has
$$
w_{n_1} = w_n^{n/n_1} = w_n^{j-1}.
$$

Thus, 
\begin{equation}
\label{cond1}
b_j = R(w_n^{j-1}) = R(w_{n_1}) \in \Q(w_{n_1}).
\end{equation}

Set $p := p_g.$  Clearly $p \nmid n_1$ and $p \mid n.$
Therefore, applying Lemma \ref{brockmain} to  $p$ one gets that
$$
o_{n_1}(p)
$$
is odd.
This proves the theorem.

%We thank  the referee for careful reading and for suggestions
%that lead to a better presentation of the paper.

% etc, etc

% The Appendices part is started with the command \appendix;
% appendix sections are then done as normal sections
% \appendix

% \section{}
% \label{}

% The Acknowledgements are an un-numbered section
%\section*{Acknowledgements}
% Acknowledgements text here

%\begin{thebibliography}{00}
% please try to use the bibitem system -
% the references should be in alphabetical order of authors' names.
% Articles with a single author first, author will 1 co-author next,
% then author with several co-authors;

% \bibitem{label}
% Text of bibliographic item

%\bibitem{label}

%\end{thebibliography}


\begin{thebibliography}{99}
\bibitem{arasu} K. T. Arasu,
\emph{A reduction theorem for circulant weighing matrices},
Australas. J. Combin. \textbf{18} (1998), p.~111--114.

\bibitem{brock} B. W. Brock,
\emph{Hermitian congruence and the existence and completion of generalized Hadamard matrices}, 
J. Combin. Theory Ser. A \textbf{49} (1988), no. 2, p. 233--261.

\bibitem{brualdi} R. A. Brualdi,
\emph{A note on multipliers of difference sets}, 
J. Res. Nat. Bur. Standards Sect. B \textbf{69} (1965), p. 87--89.

\bibitem{Craigen}
R. Craigen, G. Faucher, R. Low, T. Wares, \emph{Circulant partial Hadamard matrices},
Lin. Alg. Appl. \textbf{439}, 3307-3317, 2013.

\bibitem{davis} P. J. Davis,
\emph{Circulant matrices}, 2nd ed.,
New York, NY: AMS Chelsea Publishing, xix, 250 p. (1994).

\bibitem{EGR}
R. Euler, L. H. Gallardo, O. Rahavandrainy, \emph{Sufficient conditions for a conjecture of
Ryser about Hadamard Circulant matrices}, Lin. Alg. Appl. \textbf{437},
2877-2886, 2012.

\bibitem{lhg} L. Gallardo, \emph{On a special case of a conjecture of Ryser about Hadamard
circulant matrices}, 
Appl. Math. E-Notes \textbf{12},
182-188, 2012.

\bibitem{lhgarasu} L. H. Gallardo, \emph{Quadratic residues and a new infinity of orders for which a conjecture of
Ryser about Circulant Hadamard matrices holds}, Preprint submitted, 2014.

\bibitem{Leung}
K. H. Leung, B. Schmidt, \emph{New restrictions on possible orders of circulant Hadamard matrices},
Designs, Codes and Cryptography \textbf{64}, 143-151, 2012.

\bibitem{mossinghoff} M. J. Mossinghoff,
\emph{Wieferich pairs and Barker sequences}, 
Des. Codes Cryptogr. \textbf{53} (2009),~149--163.

\bibitem{ryser} H. J. Ryser,
Combinatorial mathematics. The Carus Mathematical Monographs,
 No. 14 Published by The Mathematical Association of America; distributed by 
John Wiley and Sons, Inc.,
 New York 1963 xiv+154 pp.

\bibitem{winterhofetal}
A. Winterhof, O. Yayla, V. Ziegler, \emph{Non-existence of some nearly perfect sequences, near Butson-Hadamard matrices,
and near Conference matrices},
Preprint, arXiv:1407.6548v1, 24Jul2014.


\end{thebibliography}
\end{document}